\newtheorem{theorem}{Theorem}
\newtheorem{proposition}{Proposition}
\newtheorem{corollary}{Corollary}
\theoremstyle{definition}
\newtheorem{definition}{Definition}
\begin{document}
\title{On a Question of Glasby, Praeger, and Xia in Characteristic $2$}
\author{Michael~J.~J.~Barry}
\address{Department of Mathematics\\
Allegheny College\\
Meadville, PA 16335}
\email{mbarry@allegheny.edu}
\thanks{This work was done when the author was on sabbatical and he thanks Allegheny College for its support.}

\subjclass{20C20}

\begin{abstract}
Recently, Glasby, Praeger, and Xia asked for necessary and sufficient conditions for the `Jordan partition' $\lambda(m,n,p)$ to be standard.  Previously we gave such conditions when $p$ is any odd prime.  Here we give such conditions when $p=2$.  Our main result is that $\lambda(m,n,2)$ is never standard for $4 \leq m \leq n$.
\end{abstract}
\maketitle
For positive integers $m$ and $n$ with $m \leq n$, the {\bf Jordan partition} $\lambda(m,n,p)=(\lambda_1, \dots, \lambda_m)$ is a partition of $m \cdot n$ into $m$ parts that arises when writing the tensor product of two Jordan matrices over a field of characteristic $p$ as a direct sum of  Jordan matrices.  See~\cite{GPX2014} for more background on this subject.  The Jordan partition  $\lambda(m,n,p)$ is {\bf standard} if{f} $\lambda_i=m+n-2 i+1$ for $i=1$, \dots $m$.

In response to a question asked in~\cite{GPX2014}, we gave necessary and sufficient conditions for a Jordan partition $\lambda(m,n,p)$ to be standard when $p$ is an odd prime in ~\cite{arXivB2014}.   Here we handle the case of $p=2$.
\begin{theorem}\label{Th:Char2}
Let $m$ and $n$ be positive integers with $m \leq n$.  Then $\lambda(m,n,2)$ is standard if{f} one of the following three conditions holds:
\begin{enumerate}
\item $m=1$
\item $m=2$ and $n$ is odd
\item $m=3$ and $n=6 +4 k$ for a nonnegative integer $k$
\end{enumerate}
\end{theorem}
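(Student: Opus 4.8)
The plan is to realize $\lambda(m,n,2)$ through the unipotent operator $u_\ell = 1+t$ acting on the indecomposable module $V_\ell = \mathbb{F}_2[t]/(t^\ell)$, so that $\lambda(m,n,2)$ is the Jordan type of $u_m\otimes u_n$ on $V_m\otimes V_n$, equivalently of the nilpotent $T = u_m\otimes u_n - 1 = t\otimes 1 + 1\otimes t + t\otimes t$ (I write $t$ for $t\otimes 1$ or $1\otimes t$ as context dictates). The whole argument rests on the characteristic-$2$ identity $(1+t)^{2^k} = 1+t^{2^k}$, which gives $T^{2^k} = t^{2^k}\otimes 1 + 1\otimes t^{2^k} + t^{2^k}\otimes t^{2^k}$. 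Throughout let $q=2^a$ be the least power of $2$ with $n\le q$ and put $s=q/2$, so $s<n\le 2s$. Since $u_\ell$ has order dividing $q$ for $\ell\le q$, every part of $\lambda(m,n,2)$ is at most $q$, and standardness already forces $m+n-1\le q$. I will use throughout that $\lambda(a,b,2)$ has exactly $\min(a,b)$ parts and that the ranks $\mathrm{rank}(T^c)=\sum_i(\lambda_i-c)_+$ determine the partition.

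The computational engine is a rank recursion valid in the range $m\le s$, which (as I check first) is the only range relevant to standardness once $n$ is not a power of $2$: if $m>s$ and $n>s$ then $m+n-1>2s=q\ge\lambda_1$, while if $n=q$ then $\lambda_1\le n<m+n-1$ for $m\ge 2$. When $m\le s$ we have $t^s=0$ on $V_m$, so $T^s = 1\otimes t^s$, whose image is the $T$-invariant subspace $V_m\otimes t^sV_n$ on which $T$ acts as the tensor operator of $V_m\otimes V_{n-s}$. Hence $\mathrm{rank}(T^{\,s+j}_{(m,n)})=\mathrm{rank}(T^{\,j}_{(m,n-s)})$ for all $j\ge 0$, which says that the parts of $\lambda(m,n,2)$ exceeding $s$ are exactly $s$ added to the parts of $\lambda(m,n-s,2)$. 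Comparing part-counts and the total $mn$ then yields two reduction rules: if $n-s\ge m$ then $\lambda(m,n,2)=s+\lambda(m,n-s,2)$ (add $s$ to every part); and if $1\le n-s<m$ then $\lambda(m,n,2)$ consists of $s$ added to the parts of $\lambda(n-s,m,2)$ together with exactly $m-(n-s)$ further parts, each forced to equal $s$ because they are $\le s$ and sum to $s\cdot(m-(n-s))$.

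With these rules the small cases are quick: $m=1$ gives $\lambda=(n)$, always standard; for $m=2$ the rules propagate standardness through the odd residues and produce a repeated part as soon as an even value is reached, giving standard iff $n$ is odd; and for $m=3$ one finds that standardness holds iff $n\equiv 2\pmod 4$ and $n\ge 6$, the base instances being $n=s+2$ and the rest arising by the additive rule. The main case $m\ge 4$ I settle by strong induction on $n$ through the statement ``$\lambda(m',n,2)$ is not standard for every $m'$ with $4\le m'\le n$.'' If $m>s$ (in particular if $n$ is a power of $2$), the bound $\lambda_1\le q<m+n-1$ defeats standardness. Otherwise $m\le s$: the first rule reduces to a strictly smaller $n$ with $m$ unchanged; the second rule with $n-s\le m-2$ produces at least two parts equal to $s$, hence a repeat; and with $n-s=m-1$ it reduces standardness to that of $\lambda(m-1,m,2)$, handled by the inductive hypothesis (or, when $m=4$, by the max-part bound applied to $\lambda(3,4,2)$).

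The step I expect to be the main obstacle is establishing the rank recursion cleanly — identifying the action of $T$ on $\mathrm{im}(T^s)=V_m\otimes t^sV_n$ with the tensor operator of the smaller pair $(m,n-s)$, and then reading off both reduction rules from the resulting equality of rank functions together with the part-count $\min(m,n)$. Once that is in place, the characteristic-$2$ collapse ``$m-(n-s)$ parts are forced to equal $s$'' is precisely the mechanism that manufactures a repeated part, and the rest is bookkeeping: verifying that every branch of the recursion terminates at an instance that is non-standard either by the max-part bound or by an already-established case.
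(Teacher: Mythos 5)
Your proposal is correct, and it takes a genuinely different route from the paper. The paper works entirely combinatorially: it introduces the composition $c(m,n,p)$ attached to $\lambda(m,n,p)$, imports the six-case recursion for $s_p(m,n)$ from the author's earlier work together with the periodicity and duality of Theorem~\ref{Th:PerDual}, proves Proposition~\ref{Pr:CtoLambda} so that standardness becomes the single condition $c(m,n,2)=(m\cdot 1)$, and then runs an induction on $m$ through the relevant cases of that recursion. You instead work directly with the nilpotent operator $T=u_m\otimes u_n-1$ on $V_m\otimes V_n$ and exploit the Frobenius identity $(1+t)^{2^k}=1+t^{2^k}$ to get the two facts that drive everything: the max-part bound $\lambda_1\le q$, and the rank identity $\mathrm{rank}(T^{s+j}_{(m,n)})=\mathrm{rank}(T^{j}_{(m,n-s)})$ coming from $T^s=1\otimes t^s$ when $m\le s$. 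I checked the derivation of your two reduction rules: the rank identity does show that the parts exceeding $s$ are $s$ plus the parts of $\lambda(m,n-s,2)$, and when $1\le n-s<m$ the remaining $m-(n-s)$ parts are each at most $s$ and sum to $s(m-(n-s))$, hence all equal $s$; your part-count input ($\lambda(a,b,2)$ has $\min(a,b)$ parts) is the standing fact from the paper's first sentence. The subcase analysis $n-s\ge m$, $n-s\le m-2$, $n-s=m-1$ is exhaustive for $m\le s<n<q$, and in the last subcase the extra part $s$ matches the required smallest standard part $n-m+1$, so standardness correctly reduces to that of $\lambda(m-1,m,2)$, covered by your induction on $n$ (or by the max-part bound on $\lambda(3,4,2)$ when $m=4$). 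Your two rules are in effect the $p=2$ specializations of the paper's Cases~1 and~4/5, but you derive them from scratch rather than citing them. What each approach buys: the paper's argument is shorter at the point of proof because the recursion machinery is already established and is uniform in $p$, while yours is self-contained, elementary, and makes the characteristic-$2$ mechanism (a block of $m-(n-s)$ parts forced to the common value $s$) transparent. The only places you are sketchy are the $m=2$ and $m=3$ bookkeeping, but with the reduction rules in hand those are routine binary-expansion arguments, and they do reproduce the conditions ``$n$ odd'' and ``$n\equiv 2\pmod 4$, $n\ge 6$'' of the theorem.
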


Our main tool in establishing this result is the composition of $m$ that we associate to $\lambda(m,n,p)$.  For now we will allow $p$ to be any prime.
\begin{definition}
Suppose that the Jordan partition
\[\lambda(m,n,p)=(\overbrace{\lambda_1,\dots,\lambda_1}^{m_1},\overbrace{\lambda_2,\dots,\lambda_2}^{m_2},
\dots,\overbrace{\lambda_r,\dots,\lambda_r}^{m_r})
=(m_1 \cdot \lambda_1, \dots, m_r \cdot \lambda_r)\]
where $\lambda_1 > \dots > \lambda_r >0$ and $\sum_{i=1}^r m_i=m$.  Denote the composition $(m_1,\dots,m_r)$ of $m$ by $c(m,n,p)$. 
\end{definition}

Note that if $\lambda(m,n,p)$ is standard, then $c(m,n,p)=(m \cdot 1)$.  The converse is true.   But in fact much more is true:  $\lambda(m,n,p)$ is completely determined by $n$ and $c(m,n,p)$, as we will prove in Proposition~\ref{Pr:CtoLambda}.

We now record two properties of $c(m,n,p)$ which follow from Theorem 4 of~\cite{GPX2014}. 
\begin{theorem}\label{Th:PerDual}
Suppose that $m \leq p^t$.
\begin{enumerate}
\item Then $c(m,n,p)=c(m,n+p^t,p)$ for every integer $n \geq m$.
\item Then $c(m,p^t+i,p)=r(c(m,2 p^t-i,p))$ for every integer $i$ in $[0, p^t]$.
\end{enumerate}
\end{theorem}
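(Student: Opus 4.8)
The plan is to obtain both statements by transporting the corresponding facts about the full partition $\lambda(m,n,p)$, supplied by Theorem~4 of~\cite{GPX2014}, across the passage from $\lambda(m,n,p)$ to $c(m,n,p)$. The only feature of that passage I will use is that $c(m,n,p)=(m_1,\dots,m_r)$ records the multiplicities of the distinct parts of $\lambda(m,n,p)$, listed from the largest part $\lambda_1$ down to the smallest part $\lambda_r$; thus any operation on partitions that preserves the pattern of equalities among parts leaves $c$ unchanged, while one that reverses the order in which the distinct part-sizes occur replaces $c$ by its reversal $r(c)$.

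For part~(1), I would invoke the periodicity in Theorem~4 of~\cite{GPX2014}: under the hypotheses $m\le p^t$ and $n\ge m$, the parts of $\lambda(m,n+p^t,p)$ are exactly the parts of $\lambda(m,n,p)$ each increased by $p^t$. Since $x\mapsto x+p^t$ is strictly increasing, we have $\lambda_i=\lambda_j$ precisely when $\lambda_i+p^t=\lambda_j+p^t$, so the number $r$ of distinct parts, their multiplicities $m_1,\dots,m_r$, and the order in which they occur are all unchanged. Reading off the multiplicities then gives $c(m,n+p^t,p)=(m_1,\dots,m_r)=c(m,n,p)$, which is part~(1).

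For part~(2), I would invoke the duality in Theorem~4 of~\cite{GPX2014} relating $\lambda(m,p^t+i,p)$ to $\lambda(m,2p^t-i,p)$ for $i\in[0,p^t]$. The point I need is that this duality sets up an order-reversing, multiplicity-preserving bijection between the distinct part-sizes of the two partitions: the largest distinct part on one side corresponds to the smallest on the other with equal multiplicity, and so on down the list. Writing $c(m,p^t+i,p)=(m_1,\dots,m_r)$, this forces the multiplicities of $\lambda(m,2p^t-i,p)$, read from its largest part downward, to be $(m_r,\dots,m_1)$, whence $c(m,2p^t-i,p)=r\bigl(c(m,p^t+i,p)\bigr)$ and, $r$ being an involution, $c(m,p^t+i,p)=r\bigl(c(m,2p^t-i,p)\bigr)$. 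The boundary values $i=0$ and $i=p^t$ should be checked for consistency, as both collapse to the single relation $c(m,p^t,p)=r(c(m,2p^t,p))$. The step I expect to be the main obstacle is exactly this invocation: unlike the strictly increasing shift of part~(1), a reflection is not monotone on the raw sizes, so before concluding that its effect on $c$ is precisely the reversal $r$ I must confirm from Theorem~4 that the correspondence of part-sizes neither merges two distinct sizes nor splits a repeated size, and that it is valid throughout the full range $i\in[0,p^t]$ rather than only where periodicity alone would suffice.
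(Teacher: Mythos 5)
Your proposal is correct and matches the paper's approach: the paper gives no argument beyond asserting that both properties ``follow from Theorem 4 of~\cite{GPX2014},'' and your transport argument --- the shift by $p^t$ preserves the pattern of equalities among parts, while the duality is an affine order-reversing correspondence and hence reverses the multiplicity sequence --- is exactly the intended derivation. The ``main obstacle'' you flag in part~(2) is resolved by the explicit affine form of the duality in Theorem 4 of~\cite{GPX2014}, which cannot merge or split part-sizes.
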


In~\cite{B2011}, we gave a recursive definition of the sequence $s_p(m,n)$, where $m \leq n$, of $m+n$ integers in six mutually exclusive and exhaustive cases.  In~\cite{arXivB2014}, we related it to the Jordan partition $ \lambda(m,n,p)$ by noting that
\[\lambda(m,n,p)=(s_p(m,n)(1),\dots,s_p(m,n)(m)),\] that is, $\lambda(m,n,p)$ is the subsequence of the first $m$ elements of $s_p(m,n)$.  In light of the definition of $s_p(m,n)$, $c(m,n,p)$ satisfies the following conditions in the corresponding cases:
\begin{enumerate}
\item $c(m,n,p)=(m+n-p^{k+1}) \oplus c(p^{k+1}-n,p^{k+1}-m,p)$
\item $c(m,n,p)=(c+d-p^k) \oplus  c((a+b+1)p^k-n,(a+b+1)p^k-m,p)$
\item $c(m,n,p)=c_1 \oplus c((a+b)p^k-n,(a+b)p^k-m,p)$ where\\
$c_1=c(\min(c,d),\max(c,d),p) \oplus (|c-d|)\oplus r(c(\min(c,d),\max(c,d),p))$
\item $c(m,n,p)=c(m,b p^k+d,p)=r(c(m,b p^k-d,p))$
\item $c(m,n,p)=c(c,b p^k,p)=(m)$
\item $c(m,n,p)=c(a p^k,b p^k,p)=(p^k) \oplus c((a-1)p^k,(b-1)p^k,p)$
\end{enumerate}
where $\oplus$ is the operation of concatenating two sequences into one and $r(s)$ denotes the reverse of the sequence $s$.  See~\cite{arXivB2014} or~\cite{B2011} for more details of the six cases.

Of course $c(m,n,p)$ determines $m$ since $\sum_{i=1}^r m_i=m$.  Next we show that $n$ and $c(m,n,p)$ determine $\lambda(m,n,p)$.
\begin{proposition}\label{Pr:CtoLambda}
If $c(m,n,p)=(m_1,\dots,m_r)$, then
\[\lambda_i=\frac{1}{m_i}\sum_{k=m_1+\dots+m_{i-1}+1}^{m_1+\dots+m_i}(m+n-2k+1)
=n+\sum_{k=i+1}^r m_k-\sum_{k=1}^{i-1} m_k\]
when $1 \leq i \leq r$.
\end{proposition}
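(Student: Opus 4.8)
The plan is to split the displayed formula into its two equalities and handle them separately. Throughout set $M_0=0$ and $M_i=m_1+\dots+m_i$ for $1\le i\le r$, so that the $i$-th constant run of $\lambda(m,n,p)$ occupies positions $M_{i-1}+1,\dots,M_i$, with $M_r=m$. The second equality carries no information about Jordan partitions and is a pure arithmetic identity, so I would dispose of it first: summing the arithmetic progression $m+n-2k+1$ over the $m_i$ values $k=M_{i-1}+1,\dots,M_i$ gives $m_i(m+n-2M_{i-1}-m_i)$, hence the average is $m+n-2M_{i-1}-m_i$. Writing $m=\sum_{k=1}^{r}m_k$ and substituting $M_{i-1}=\sum_{k<i}m_k$ rearranges this into $n+\sum_{k>i}m_k-\sum_{k<i}m_k$, which is exactly the right-hand side.

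The genuine content lies in the first equality, $m_i\lambda_i=\sum_{k=M_{i-1}+1}^{M_i}(m+n-2k+1)$, i.e.\ that the common value of the parts in block $i$ equals the average of the standard (characteristic-zero) staircase values over that block. Let $\Lambda_1\ge\dots\ge\Lambda_m$ denote $\lambda(m,n,p)$ listed with multiplicity. Summing this assertion over the first $i$ blocks and telescoping shows that, at every break point, it is equivalent to the partial-sum identity $\sum_{k=1}^{M_i}\Lambda_k=\sum_{k=1}^{M_i}(m+n-2k+1)=M_i(m+n-M_i)$; conversely, this identity at all break points recovers the first equality by differencing. In words, the largest $M_i$ parts of $\lambda(m,n,p)$ sum to the same total as the largest $M_i$ parts of the standard partition, and the whole proposition reduces to proving this at the break points $M_1,\dots,M_r$.

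This reduction is the main obstacle, since it is precisely where information about $\lambda(m,n,p)$ must enter: the identity fails for an arbitrary partition of $mn$ with multiplicity profile $(m_1,\dots,m_r)$ and holds only because $\lambda(m,n,p)$ is a Jordan partition. To establish it I would appeal to the sequence $s_p(m,n)$ of \cite{B2011}, whose first $m$ terms are $\lambda(m,n,p)$; one expects the positions at which $s_p(m,n)$ is constant to be exactly the blocks recorded by $c(m,n,p)$, and the defining recursion to force each constant run to be the block-average of the staircase, so that the partial sums at these positions agree with the staircase partial sums. Alternatively one can argue by induction, using the six recursive cases for $c(m,n,p)$ together with the periodicity $c(m,n,p)=c(m,n+p^t,p)$ of Theorem~\ref{Th:PerDual} to reduce to strictly smaller Jordan partitions, and checking that the partial-sum identity is inherited through the concatenation and reversal operations $\oplus$ and $r$. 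I expect the bookkeeping in this six-case induction to be the delicate part, whereas the two reductions above are routine.
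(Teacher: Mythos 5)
Your two preliminary reductions are correct and essentially match remarks the paper makes in passing: the second equality is indeed pure arithmetic (the paper records the equivalent form $\lambda_i=m+n-2\sum_{k<i}m_k-m_i$ at the outset), and reformulating the first equality as the partial-sum identity $\sum_{k=1}^{M_i}\Lambda_k=M_i(m+n-M_i)$ at break points is a harmless, if unused, repackaging. The problem is that everything after that --- the part you defer as ``bookkeeping'' --- \emph{is} the proof. The paper's argument consists entirely of an induction on $m+n$ through the six recursive cases, and your proposal stops exactly where that work begins, offering only the assertion that the identity ``is inherited through the concatenation and reversal operations.'' That assertion is the theorem, not a step toward it, so as written the proposal has a genuine gap rather than an alternative route.

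There is also a concrete missing ingredient in your sketch of the induction. The recursion for the composition $c(m,n,p)$ alone does not tell you how the part sizes transform; you must invoke the corresponding recursion for the partition itself (via $s_p(m,n)$), in which the parts of the smaller Jordan partition enter \emph{affinely} and sometimes with a reflection. For instance, in Case 3 the first $s$ parts satisfy $\lambda_i=\lambda_i'+(a+b)p^k$ while the mirrored block satisfies $\lambda_i=(a+b)p^k-\lambda'_{t+1-i}$, and in Case 4 one has $\lambda_i=2bp^k-\lambda'_{r+1-i}$; there is also the middle part $|c-d|$ in Case 3 to account for separately. Verifying that your partial-sum identity (or, as the paper does, the closed formula for $\lambda_i$) survives these shifts and reflections is a nontrivial computation in each case --- the Case 3 chain of equalities in the paper is a dozen lines long --- and none of it is present or even set up in the proposal. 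To turn this into a proof you would need to state the partition-level recursions explicitly and carry out the six verifications.
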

\begin{proof}
Note that
\[n+\sum_{k=i+1}^r m_k-\sum_{k=1}^{i-1} m_k=m+n-2 \sum_{k=1}^{i-1}-m_i,\]
and we will use the second form at points of the proof.

Let $k$ be the unique nonnegative integer such that $p^k \leq n<p^{k+1}$.    Write $n= b p^k+d$ with $0<b<p$ and $0 \leq d <p^k$.  Write $m=a p^k +c$ with $0 \leq a<p$ and $0 \leq c <p^k$.  Note that $a+c>0$.

We proceed by induction on $m+n$.  The result is true for $(m,n)=(1,1)$.  Assume the result is true for all $(m',n')$ with $m'+n'<m+n$.

Case 1: $m+n>p^{k+1}$.  Then $m_1=m+n-p^{k+1}$, $\lambda_1=p^{k+1}$, and $c(p^{k+1}-n,p^{k+1}-m,p)=(m_2,\dots,m_r)$.  Note that $n+m-m_1=p^{k+1}=\lambda_1$, that is,  the formula holds for $i=1$.  By inductive assumption, when $2 \leq i \leq r$,
\begin{align*}
\lambda_i&=(p^{k+1}-m) +(m_{i+1}+\dots+m_r)-(m_2+\dots+m_{i-1})\\
&=(n-m_1)+(m_{i+1}+\dots+m_r)-(m_2+\dots+m_{i-1})\\
&=n+(m_{i+1}+\dots+m_r)-(m_1+m_2+\dots+m_{i-1}),
\end{align*}
showing the formula holds $i=2$, \dots,$r$.

Case 2: $m+n \leq p^{k+1}$ but $c+d>p^k$.  Then $m_1=c+d-p^k$, $\lambda_1=(a+b+1)p^k$, and $c((a+b+1)p^k-n,(a+b+1) p^k-m,p)=(m_2,\dots,m_r)$.  Note that $n+m-m_1=(a+b+1) p^k =\lambda_1$, that is, the formula holds for $i=1$.  Verification that the formula for $\lambda_i$ holds when $2 \leq i \leq m$ is similar to Case 1 using the fact that $(a+b+1)p^k-m=n-m_1$.

Case 3: $m+n \leq p^{k+1}$, $1 \leq c+d \leq p^k$, and $a>0$.  Suppose that \[c(\min(c,d),\max(c,d),p)=(m_1',\dots,m_s')\] and \[\lambda(\min(c,d),\max(c,d),p)=(m_1' \cdot \lambda_1', \dots, m_s' \cdot \lambda_s').\]  So $m_i'=m_i$ and $\lambda_i'=\lambda_i-(a+b) p^k$ when $1 \leq i \leq s$.  Let $t=2 s+\epsilon$ where $\epsilon=1$ if $c \neq d$ and $\epsilon=0$ if $c = d$.  Then $c((a+b)p^k-n,(a+b)p^k-m,p)=(m_{t+1},\dots,m_r)$ and
\[(m_1,\dots, m_t)=\begin{cases}
(m_1',\dots,m_s',m_s', \dots,m_1'), & \text{if $c=d$}\\
(m_1',\dots,m_s',|c-d|,m_s', \dots,m_1'), & \text{if $c \neq d$}
\end{cases}\]

By inductive assumption, when $1 \leq i \leq s$,
\[
\lambda_i'=c+d-2 \sum_{k=1}^{i-1}m_k'-m_{i}'
=c+d-2 \sum_{k=1}^{i-1}m_k-m_{i}.
\]

Thus, when $1 \leq i \leq s$,
\[
\lambda_i=\lambda_i'+(a+b) p^k=m+n-2 \sum_{k=1}^{i-1}m_k-m_{i}\]
and the formula holds.

Assume that $c \neq d$, so $\lambda_{s+1}=0+(a+b) p^k$ and $m_{s+1}=|c-d|$.  Now
\begin{align*}
\lambda_{s+1}&=(a+b)p^k\\
&=(a+b)p^k+c+d-2 \min(c,d)-|c-d|\\
&=m+n-2 \sum_{k=1}^{s}m_k-m_{s+1}
\end{align*}
and the formula holds for $i=s+1$.

We now consider the case of $t-s+1 \leq i \leq r$.  Then $m_i=m_{t+1-i}$ and $\lambda_i=-\lambda_{t+1-i}'+(a+b) p^k$.  Thus
{\allowdisplaybreaks
\begin{align*}
\lambda_i&=-\lambda_{t+1-i}'+(a+b) p^k\\
&=(a+b) p^k-(c+d)+2 \sum_{k=1}^{t-i}m_k+m_{t-i+1}\\
&=m+n-2(c+d)+2 \sum_{k=1}^{t-i}m_k+m_{t-i+1}\\
&=m+n-2 \min(c,d)-|c-d| -(c+d)+2 \sum_{k=1}^{t-i}m_k+m_{t-i+1}\\
&=m+n-2 \sum_{k=1}^s m_i -|c-d| -(c+d)+2 \sum_{k=1}^{t-i}m_k+m_{t-i+1}\\
&=m+n-2 \sum_{k=1}^s m_i -|c-d|-|c-d|-2 \sum_{k=1}^s m_k+2 \sum_{k=1}^{t-i}m_k+m_{t-i+1}\\
&=m+n-2 \sum_{k=1}^s m_i -|c-d|-|c-d|-2 \sum_{k=t-i+1}^s m_k+m_{t-i+1}\\
&=m+n-2 \sum_{k=1}^s m_i -2|c-d|-2 \sum_{k=t-s+1}^i m_k+m_{i}\\
&=m+n-2 \sum_{k=1}^s m_i -2|c-d|-2 \sum_{k=t-s+1}^{i-1} m_k-m_{i}\\
&=m+n-2\sum_{k=1}^{i-1}m_k-m_i.
\end{align*}}

By inductive assumption, when $t+1 \leq i \leq m$,
\begin{align*}
\lambda_i&=(a+b) p^k-m+(m_{i+1}+\dots+m_r)-(m_{t+1}+\dots+m_{i-1})\\
&=b p^k-c+(m_{i+1}+\dots+m_r)-(m_{t+1}+\dots+m_{i-1})\\
&=b p^k+d+(m_{i+1}+\dots+m_r) -(c+d)-(m_{t+1}+\dots+m_{i-1})\\
&=n+(m_{i+1}+\dots+m_r)-(2 \min(c,d)+|c-d|)-(m_{t+1}+\dots+m_{i-1})\\
&=n+(m_{i+1}+\dots+m_r)-(m_1+\dots+m_t)-(m_{t+1}+\dots+m_{i-1}).
\end{align*}
Thus the formula for $\lambda_i$ holds when $t+1 \leq i \leq r$.

Case 4: $m+n \leq p^{k+1}$, $1 \leq c+d \leq p^k$, $a=0$ (so $m=c$), and $d>0$.  Then $c(m,n,p)=r(c(m,b p^k-d,p))$.
Suppose that $c(m,b p^k-d,p)=(m_1',\dots,m_r')$ and $\lambda(m,b p^k-d,p)=(m_1' \cdot \lambda_1',\dots,m_r' \cdot \lambda_r')$.  Hence $m_i=m_{r+i-i}'$ and $\lambda_i=\lambda_{r+1-i}'+2 b p^k$.  Then, by inductive assumption,
\[\lambda_i'=(b p^k-d)+(m_{i+1}'+\dots+m_r')-(m_1'+\dots+m_{i-1}')\]
and so
\begin{align*}
\lambda_{r+1-i}'&=(b p^k-d)+(m_{r+2-i}'+\dots+m_r')-(m_1'+\dots+m_{r-i}')\\
&=(b p^k-d)+(m_{i-1}+\dots+m_1)-(m_r+\dots+ m_{i+1}).
\end{align*}
Therefore
\begin{align*}
\lambda_i&=-\lambda_{r+1-i}'+2 b p^k \\
&=2 b p^k-b p^k+d-(m_{i-1}+\dots+m_1)+(m_r+\dots+ m_{i+1})\\
&=n+(m_{i+1}+\dots+m_r)-(m_1+\dots+m_{i-1})
\end{align*}
when $1 \leq i \leq r$.

Case 5: $m+n \leq p^{k+1}$, $1 \leq c+d \leq p^k$, $a=0$, and $d=0$, so $(m,n)=(c,b p^k)$.  In this case $r=1$, $m_1=m$, and $\lambda_1=n$. Therefore $n+m-m_1=\lambda_1$ and the formula is valid.

Case 6: $m+n \leq p^{k+1}$, $c=d=0$, so $0<a<a+b \leq p$.  Then $m_1=p^k$, $\lambda_1=(a+b-1)p^k$, and $c((a-1) p^k,(b-1)p^k,p)=(m_2,\dots,m_r)$.  Because $n+m-m_1=\lambda_1$,  the formula holds for $i=1$.  Verification that the formula for $\lambda_i$ holds when $2 \leq i \leq m$ is similar to Case 1. 
\end{proof}

\begin{corollary}\label{Cor:Standard}
If $m$ and $n$ are positive integers with $m \leq n$, then $\lambda(m,n,p)$ is standard if{f} $c(m,n,p)=(m \cdot 1)$. \end{corollary}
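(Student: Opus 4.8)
The plan is to derive both implications directly from Proposition~\ref{Pr:CtoLambda}, which already expresses each distinct part $\lambda_i$ of $\lambda(m,n,p)$ in terms of $n$ and the composition $c(m,n,p)=(m_1,\dots,m_r)$, so that essentially no new computation is needed.

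For the forward direction I would suppose that $\lambda(m,n,p)$ is standard, so that its $m$ parts are $\lambda_i=m+n-2i+1$ for $i=1,\dots,m$. Since these values strictly decrease in $i$ (the step is $-2$), the $m$ parts are pairwise distinct. Hence there are exactly $r=m$ distinct part-values, each occurring with multiplicity $1$, which is precisely the assertion $c(m,n,p)=(m\cdot 1)$.

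For the converse I would assume $c(m,n,p)=(m\cdot 1)$, that is $r=m$ and $m_1=\dots=m_m=1$, and then substitute these multiplicities into the formula of Proposition~\ref{Pr:CtoLambda}. For $1\le i\le m$ this gives
\[
\lambda_i = n+\sum_{k=i+1}^{m}1-\sum_{k=1}^{i-1}1 = n+(m-i)-(i-1) = m+n-2i+1,
\]
so the $m$ distinct parts $\lambda_1>\dots>\lambda_m$, each of multiplicity one, are exactly the parts of a standard Jordan partition; thus $\lambda(m,n,p)$ is standard.

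The only point that will require any care is keeping the two indexings straight: in Proposition~\ref{Pr:CtoLambda} the index $i$ ranges over the $r$ \emph{distinct} part-values, whereas the definition of \textbf{standard} indexes all $m$ parts. The crux of the forward direction is therefore the observation that the strict monotonicity of $m+n-2i+1$ forces every multiplicity to equal $1$, so that the two indexings coincide. Once that is noted, both implications are immediate evaluations of the proposition's formula, and I expect no genuine obstacle beyond this bookkeeping.
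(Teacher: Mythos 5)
Your proof is correct and matches the paper's intended argument: the paper leaves the corollary as an immediate consequence of Proposition~\ref{Pr:CtoLambda} (the converse) together with the earlier remark that a standard partition has $m$ distinct parts, forcing $c(m,n,p)=(m\cdot 1)$ (the forward direction). Your bookkeeping note about the two indexings is exactly the right point to flag, and no further detail is needed.
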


\begin{proof}[Proof of Theorem~\ref{Th:Char2}]
It is clear that $c(1,n,2)=(1)$ and so $\lambda(1,n,2)$ is standard for all integers $n \geq 1$.
Suppose that $2^{t-1}<m \leq 2^t$.  By the periodicity result of Theorem~\ref{Th:PerDual}, it suffices to compute $c(m,n,2)$ for integers $n$ in the interval $[2^t,2^{t+1}-1]$.

First consider the case of $m=2^t$ where $t$ is a positive integer.  Then, by Case 6, $c(m,2^t)=(2^t)$.  Now we look at the case $(m,n)=(2^t, 2^t+i)$, where $1 \leq i \leq 2^t-1$.  Then we are in Case 1, so
$c(m,n,2)=(i) \oplus c(2^t-i,2^t,2)$.
Since $(2^t-i,2^t,2)$ is a Case 5 situation, $c(2^t-i,2^t,2)=(2^t-i)$.  Thus $c(m,n,2)=(i) \oplus (2^t-i)$.  We have shown that $c(2,n,2)=(1,1)$ if{f} $n$ is odd, and that $c(4,n,2)$ never equals $(1,1,1,1)$.  By Corollary~\ref{Cor:Standard}, $\lambda(2,n,2)$ is standard if{f} $n$ is odd.

Next we prove by induction on $m$ that $c(m,n,p)$ never equals $(m \cdot 1)$ for $4 \leq m \leq n$.  We have already verified this for the base case $m=4$ and  any $m =2^t \geq 4$.  We can assume that $4 \leq 2^{t-1}<m<2^t$, and write $m=2^{t-1}+j$ where $1 \leq j \leq 2^{t-1}-1$.  Then $(m,2^t)$ is a Case 5 situation, so $c(m,2^t,2)=(m) \neq (m \cdot 1)$.  When $1 \leq i \leq 2^{t-1}-j$, $(m, 2^t+i)$ is a Case 4 situation, and so $c(m,2^t+i,2)=r(c(m,2^t-i,2))$.  By our inductive assumption, $c(m,2^t-i,2) \neq (m \cdot 1)$, which implies that $c(m,n,2) \neq (m \cdot 1)$.

When $2^{t-1}-j+1 \leq i \leq 2^t-1$, $(m,2^t+i)$ is a Case 1 situation.  Therefore $c(m,2^t+i,2)=(m+i-2^t) \oplus c(2^t-i,2^t+2^{t-1}-j,2)$.  If $m+i-2^t>1$, then $c(m,2^t-i,2) \neq (m \cdot 1)$.  Assume that $m+i-2^t=1$.  Then $2^t-i=m-1 \geq 4$.  By inductive assumption, $c(2^t-i,2^t+2^{t-1}-j,2) \neq ((m-1) \cdot 1)$, which implies that $c(m,2^t+i,2) \neq (m \cdot 1)$.  This completes our proof by induction.

Finally one can check that $c(3,4,2)=(3)$, $c(3,5,2)=(1,2)$, $c(3,6,2)=(1,1,1)$, and $c(3,7,2)=(2,1)$.  As a consequence $c(3,n,2)=(1,1,1)$ if{f} $n=6 +4 k$ for some nonegative integer $k$.  By Corollary~\ref{Cor:Standard}, $\lambda(3,n,2)$ is standard if{f} $n=6 +4 k$ for some nonegative integer $k$.
\end{proof}

\end{document}